\documentclass[12pt, twoside]{article}
\usepackage{secdot}

\usepackage{amsmath, amsthm, amscd, amsfonts, amssymb, graphicx, color}
\usepackage[bookmarksnumbered, colorlinks, plainpages]{hyperref}

\setcounter{page}{1}
\setlength{\textheight}{21.5cm}
\setlength{\textwidth}{14.1cm}
\setlength{\oddsidemargin}{1cm}
\setlength{\evensidemargin}{1cm}
\pagestyle{myheadings}
\thispagestyle{empty}

\markboth{\small{D. Dey }}{\small{ Ricci-Yamabe soliton }}

\begin{document}
\centerline{}

\centerline{\Large{\bf Almost Kenmotsu metric as Ricci-Yamabe soliton}}

\centerline{}

%% My definition
\newcommand{\mvec}[1]{\mbox{\bfseries\itshape #1}}

\centerline{\large{ Dibakar Dey}}

%\centerline{}
%\centerline{Department of Pure Mathematics} 
%\centerline{ University of Calcutta}
%\centerline{35, Ballygunge Circular Road}
%\centerline{Kolkata-700019, West Bengal, India }
%\centerline{E-mail: deydibakar3@gmail.com}

%\centerline{}

%\centerline{}

\newtheorem{theorem}{\quad Theorem}[section]

\newtheorem{definition}[theorem]{\quad Definition}

\newtheorem{proposition}[theorem]{\quad Proposition}

\newtheorem{question}[theorem]{\quad Question}

\newtheorem{remark}[theorem]{\quad Remark}

\newtheorem{corollary}[theorem]{\quad Corollary}

\newtheorem{note}[theorem]{\quad Note}

\newtheorem{lemma}[theorem]{\quad Lemma}
\newtheorem{example}[theorem]{\quad Example}
\newtheorem{notation}[theorem]{\quad Notation}
\numberwithin{equation}{section}
\newcommand{\be}{\begin{equation}}
\newcommand{\ee}{\end{equation}}
\newcommand{\bea}{\begin{eqnarray}}
\newcommand{\eea}{\end{eqnarray}}

\vspace{0.5cm}
\begin{abstract}
 The object of the present paper is to characterize two classes of almost Kenmotsu manifolds admitting Ricci-Yamabe soliton. It is shown that a $(k,\mu)'$-almost Kenmotsu manifold admitting a Ricci-Yamabe soliton or gradient Ricci-Yamabe soliton is locally isometric to the Riemannian product $\mathbb{H}^{n+1}(-4) \times \mathbb{R}^n$. For the later case, the potential vector field is pointwise collinear with the Reeb vector field. 
Also, a $(k,\mu)$-almost Kenmotsu manifold admitting certain Ricci-Yamabe soliton with the curvature property $Q \cdot P = 0$ is locally isometric to the hyperbolic space $\mathbb{H}^{2n+1}(-1)$ and the non-existense of the curvature property $Q \cdot R = 0$ is proved.
\end{abstract} 
\textbf{Mathematics Subject Classification 2010:}  53D15, 35Q51.\\

\noindent \textbf{Keywords:} Ricci-Yamabe soliton, Gradient Ricci-Yamabe soliton, Almost Kenmotsu manifolds, Nullity distributions.

%==============================
\section{Introduction }
%==============================
In 1982, the concept of Ricci flow was introduced by Hamilton \cite{hamilton1}. The Ricci flow is an evolution equation for metrics on a Riemannian manifold $(M^n,g)$ given by
\bea
\nonumber \frac{\partial g}{\partial t} = - 2S,
\eea
where $g$ is the Riemannian metric and $S$ denotes the $(0,2)$-symmetric Ricci tensor. A self similar solution of the Ricci flow is called Ricci soliton \cite{hamilton2} and is defined as
\bea
\nonumber \pounds_V g + 2S = 2\lambda g
\eea
where $\pounds_V$ denotes the Lie derivative along the vector field $V$ and $\lambda$ is a constant.\\

\noindent The notion of Yamabe flow was proposed by Hamilton \cite{hamilton3} in 1989, which is defined on a Riemannian manifold $(M^n,g)$ as 
\bea
\nonumber  \frac{\partial g}{\partial t} = - rg,
\eea
where $r$ is the scalar curvature of the manifold. In dimension 2, we know that $S = \frac{r}{2}g$ and hence, the Ricci flow and Yamabe flow are equivalent. For $n > 2$, these two notions are not same as the Yamabe flow preserves the conformal class of metrics but the Ricci flow does not in general. A Yamabe soliton on a Riemannian manifold $(M^n,g)$ is a triplet $(g,V,\lambda)$ satisfying
\bea
\nonumber \frac{1}{2}\pounds_V g = (\lambda - r)g,
\eea
where $\lambda$ is a constant. The Ricci soliton and Yamabe soliton are said to be shrinking, steady or expanding according as $\lambda$ is positive, zero or negative respectively.\\

\noindent In 2019, G$\ddot{u}$ler and Crasmareanu \cite{gc} introduced a scalar combination of the Ricci flow and the Yamabe flow under the name of Ricci-Yamabe map. Let $(M^n,g)$ be a Riemannian manifold, $T_2^s(M)$ be the linear space of its $(0,2)$-symmetric tensor fields and $Riem(M) \subset T_2^s(M)$ be the infinite space of its Riemannian metrics. In \cite{gc}, the authors gives the following two definitions.
\begin{definition} $(\cite{gc})$
A Riemannian flow on $M$ is a smooth map:
$$g : I \subseteq \mathbb{R} \rightarrow Riem(M),$$
where $I$ is a given open interval.
\end{definition}
\begin{definition} $(\cite{gc})$
The map $RY^{(\alpha,\beta,g)} : I \rightarrow T_2^s(M)$ given by:
$$RY^{(\alpha,\beta,g)} = \frac{\partial g}{\partial t}(t) + 2\alpha S(t) + \beta r(t)g(t)$$
is called the $(\alpha,\beta)$-Ricci-Yamabe map of the Riemannian flow $(M,g)$. If $$RY^{(\alpha,\beta,g)} \equiv 0,$$ then $g(\cdot)$ will be called an $(\alpha,\beta)$-Ricci-Yamabe flow.
\end{definition}

\noindent The $(\alpha,\beta)$-Ricci-Yamabe map can be a Riemannian or semi-Riemannian or singular Riemannian flow due to the signs of $\alpha$ and $\beta$. The freedom of choosing the sign of $\alpha$ and $\beta$ is very useful in geometry and relativity. Recently, a bi-metric approach of the spacetime geometry appears in \cite{akrami} and \cite{bc}. The notion of $(\alpha,\beta)$-Ricci-Yamabe soliton or simply 
Ricci-Yamabe soliton from the Ricci-Yamabe flow can be defined as follows:
\begin{definition}
A Riemannian manifold $(M^n,g)$, $n > 2$ is said to admit a Ricci-Yamabe soliton (in short, RYS) $(g,V,\lambda,\alpha,\beta)$ if
\bea
\pounds_V g + 2\alpha S = (2\lambda - \beta r)g, \label{1.1}
\eea
where $\lambda,\; \alpha,\; \beta \in \mathbb{R}$.
If $V$ is gradient of some smooth function $f$ on $M$, then the above notion is called gradient Ricci-Yamabe solition (in short, GRYS) and then (\ref{1.1}) reduces to
\bea
\nabla^2 f + \alpha S = (\lambda - \frac{1}{2}\beta r)g, \label{1.2}
\eea
where $\nabla^2 f$ is the Hessian of $f$.
\end{definition}

\noindent The RYS (or GRYS) is said to be expanding, steady or shrinking according as $\lambda < 0$, $\lambda = 0$ or $\lambda > 0$ respectively. A RYS (or GRYS) is called an almost RYS (or GRYS) if $\alpha$, $\beta$ and $\lambda$ are smooth functions on $M$. The above notion generalizes a large class of soliton like equations. A RYS (or GRYS) is said to be a 
\begin{itemize}
    \item Ricci soliton (or gradient Ricci soliton) (see \cite{hamilton2}) if $\alpha = 1$, $\beta = 0$.
    \item Yamabe soliton (or gradient Yamabe soliton) (\cite{hamilton3} if $\alpha = 0$, $\beta = 1$.
    \item Einstein soliton (or gradient Einstein soliton) (\cite{catino1}) if $\alpha = 1$, $\beta = -1$.
    \item $\rho$-Einstein soliton (or gradient $\rho$-Einstein soliton) (\cite{catino2}) if $\alpha = 1$, $\beta = - 2\rho$.
\end{itemize}
We say that the RYS or GRYS is proper if $\alpha \neq 0,\;1$.\\

\noindent In 2016, Wang \cite{ywang} proved that a $(k,\mu)'$-almost Kenmotsu manifold admitting gradient Ricci soliton is locally isometric to $\mathbb{H}^{n+1}(-4) \times \mathbb{R}^n$. In \cite{vv}, the authors proved the same result for a gradient $\rho$-Einstein soliton. Thus a natural question is
\begin{question}
Does the above result is true for a $(2n + 1)$-dimensional $(k,\mu)'$-almost Kenmotsu manifold admitting RYS or GRYS?
\end{question}
We will answer this question affirmatively. Moreover, we have studied this notion on $(k,\mu)$-almost Kenmotsu manifolds with some curvature properties. The paper is organized as follows: In section 2, we give some preliminaries on almost Kenmotsu manifolds. Section 3 deals with $(k,mu)'$-almost Kenmotsu manifolds admitting RYS and GRYS. Section 4 contains some curvature properties of $(k,\mu)$-almost Kenmotsu manifolds admitting proper RYS.

%========================================
\section{Preliminaries}
%========================================
A $(2n+1)$-dimensional almost contact metric manifold $M$ is a smooth manifold together with a structure $(\phi, \xi, \eta,g)$ satisfying
\be
\phi^{2}X = -X + \eta(X)\xi,\; \eta(\xi)=1,\; \phi \xi = 0,\; \eta \circ \phi = 0 \label{2.1}
\ee
\be
 g(\phi X,\phi Y)=g(X,Y)-\eta(X)\eta(Y) \label{2.2}
\ee
for any vector fields $X$, $Y$ on $M$, where $\phi$ is a $(1,1)$-tensor field, $\xi$ is a unit vector field, $\eta$ is a 1-form defined by $\eta(X) = g(X,\xi)$ and $g$ is the Riemannian metric. Using (\ref{2.2}), we can easily see that $\phi$ is skew symmetric, that is,
\be
g(\phi X,Y) = - g(X,\phi Y). \label{2.3}
\ee
The fundamental 2-form $\Phi$ on an almost contact metric manifold is defined by $\Phi(X,Y)=g(X,\phi Y)$ for any vector fields $X$, $Y$ on $M$. An almost contact metric manifold such that $\eta$ is closed and $d\Phi = 2\eta \wedge \Phi$ is called almost Kenmotsu manifold (see \cite{dp}, \cite{pv}). Let us denote the distribution orthogonal to $\xi$ by $\mathcal{D}$ and defined by $\mathcal{D} = Ker(\eta) = Im(\phi)$. In an almost Kenmotsu manifold, since $\eta$ is closed, $\mathcal{D}$ is an integrable distribution. \\
 Let $M$ be a $(2n + 1)$-dimensional almost Kenmotsu manifold. We denote by $h = \frac{1}{2}\pounds_{\xi}\phi$ and $l = R(\cdot, \xi)\xi$ on $M$. The tensor fields $l$ and $h$ are symmetric operators and satisfy the following relations \cite{pv}:
 \bea
h\xi=0,\;l\xi=0,\;tr(h)=0,\;tr(h\phi)=0,\;h\phi+\phi h=0, \label{2.4}
 \eea
\bea
 \nabla_{X}\xi=X - \eta(X)\xi - \phi hX(\Rightarrow \nabla_{\xi}\xi=0), \label{2.5}
\eea
 \be
  \phi l \phi-l = 2(h^{2} - \phi^{2}),\label{2.6}
 \ee
 \be
   R(X,Y)\xi = \eta(X)(Y - \phi hY) - \eta(Y)(X - \phi hX)+(\nabla_{Y}\phi h)X - (\nabla_{X}\phi h)Y \label{2.7}
 \ee
 for any vector fields $X,Y$ on $M$. The $(1,1)$-type symmetric tensor field $h'=h\circ\phi$ is anti-commuting with $\phi$ and $h'\xi=0$. Also it is clear that (\cite{dp}, \cite{wang-liu1})
 \bea
  h=0\Leftrightarrow h'=0,\;\;h'^{2}=(k+1)\phi^2(\Leftrightarrow h^{2}=(k+1)\phi^2).\label{2.8}
\eea
The notion of $(k,\mu)$-nullity distribution on a contact metric manifold $M$ was introduced by Blair et. al. \cite{bkp}, which is defined for any $p \in M$ and $k, \mu \in \mathbb{R}$ as follows:
\bea
\nonumber N_p(k,\mu) &=& \{ Z \in T_p(M) : R(X,Y)Z = k[g(Y,Z)X - g(X,Z)Y] \\ &&+ \mu[g(Y,Z)hX - g(X,Z)hY]\} \label{2.9}
\eea
for any vector fields $X,\; Y \in T_p(M)$, where $T_p(M)$ denotes the tangent space of $M$ at any point $p \in M$ and $R$ is the Riemann curvature tensor.
In \cite{dp}, Dileo and Pastore introduced the notion of $(k,\mu)'$-nullity distribution, on an almost  Kenmotsu manifold  $(M, \phi, \xi, \eta, g)$, which is defined for any $p \in M$ and $k,\mu \in \mathbb {R}$ as follows:
\bea
 N_{p}(k,\mu)' &=& \{Z\in T_{p}(M):R(X,Y)Z = k[g(Y,Z)X-g(X,Z)Y]\nonumber\\&&+\mu[g(Y,Z)h'X-g(X,Z)h'Y]\}.\label{2.10}
 \eea
 for any vector fields $X,\; Y \in T_p(M)$. For further details on almost Kenmotsu manifolds, we refer the reader to go through the references (\cite{dey2}-\cite{dp}, \cite{ywang}-\cite{wang-liu2}).

%==================================================
\section{$(k,\mu)'$-almost Kenmotsu manifolds}
%==================================================
   Let $X\in\mathcal{D}$ be the eigen vector of $h'$ corresponding to the eigen value $\delta$. Then from (\ref{2.8}), it is clear that $\delta^{2} = -(k+1)$, a constant. Therefore $k \leq -1$ and $\delta = \pm\sqrt{-k-1}$. We denote by $[\delta]'$ and $[-\delta]'$ the corresponding eigen spaces related to the non-zero eigen value $\delta$ and $-\delta$ of $h'$, respectively. In \cite{dp}, it is proved that in a $(2n + 1)$-dimensional $(k,\mu)'$-almost Kenmotsu manifold $M$ with $h' \neq 0$, $k < -1,\;\mu = -2$ and Spec$(h') = \{0,\delta,-\delta\}$, with $0$ as simple eigen value and $\delta = \sqrt{-k-1}$. 
 In \cite{wang-liu2}, Wang and Liu proved that for a $(2n + 1)$-dimensional $(k,\mu)'$-almost Kenmotsu manifold $M$ with $h' \neq 0$, the Ricci operator $Q$ of $M$ is given by
\bea
QX = -2nX + 2n(k+1)\eta(X)\xi - 2nh'X.\label{3.1}
\eea
Moreover, the scalar curvature of $M$ is $r = 2n(k-2n)$. From (\ref{2.10}), we have
\bea
R(X,Y)\xi=k[\eta(Y)X-\eta(X)Y] - 2[\eta(Y)h'X-\eta(X)h'Y],\label{3.2}
\eea
where
$k,\mu\in\mathbb R$. Also from (\ref{3.2}), we get
\bea
R(\xi,X)Y = k[g(X,Y)\xi - \eta(Y)X] - 2[g(h'X,Y)\xi - \eta(Y)h'X].\label{3.3}
 \eea
Again from (\ref{3.1}), we have
\bea
S(Y,\xi) = g(QY,\xi) = 2nk\eta(Y). \label{3.4}
\eea
Using (\ref{2.5}), we obtain
\bea
(\nabla_X \eta)Y = g(X,Y) - \eta(X)\eta(Y) + g(h'X,Y). \label{3.5}
\eea
%=================================
\subsection{Ricci-Yamabe soliton}
%==================================
We now consider the notion of RYS in the framework  $(k,\mu)'$-almost Kenmotsu manifolds. To prove our first theorem regarding RYS, we need the following lemmas:
\begin{lemma} $(\cite{dey1})$\label{l3.1}
In a $(k,\mu)'$-almost Kenmotsu manifold $M^{2n+1}$  with $h' \neq 0$, the following relation holds
\bea
\nonumber &&(\nabla_Z S)(X,Y) - (\nabla_X S)(Y,Z) - (\nabla_Y S)(X,Z) \\ \nonumber &&= - 4n(k + 2)g(h'X,Y)\eta(Z). 
\eea
\end{lemma}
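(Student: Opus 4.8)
The plan is to compute $\nabla S$ directly from the explicit Ricci operator and then feed the result into the three-term combination, where I expect almost everything to cancel by symmetry. First I would record the Ricci tensor: from (\ref{3.1}), $S(X,Y)=g(QX,Y)=-2n\,g(X,Y)+2n(k+1)\eta(X)\eta(Y)-2n\,g(h'X,Y)$. Because $\nabla g=0$ and $k$ is constant, differentiating covariantly along any $Z$ gives
\[
(\nabla_Z S)(X,Y)=2n(k+1)\big[(\nabla_Z\eta)(X)\,\eta(Y)+\eta(X)(\nabla_Z\eta)(Y)\big]-2n\,g\big((\nabla_Z h')X,Y\big),
\]
and the $\eta$-derivatives are handled at once by (\ref{3.5}). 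Note that both surviving pieces are symmetric in $(X,Y)$.

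The decisive ingredient, and the step I expect to be the main obstacle, is an explicit formula for $\nabla h'$. Equating the two expressions (\ref{2.7}) and (\ref{3.2}) for $R(X,Y)\xi$ (using $\phi h=-h'$) already yields the antisymmetric part $(\nabla_X h')Y-(\nabla_Y h')X$; supplementing this with the symmetric data carried by $h'^{2}=(k+1)\phi^2$ together with $(\nabla_X h')\xi=-h'(\nabla_X\xi)=-(h'X+h'^{2}X)$ pins down the full tensor, giving the standard $(k,\mu)'$-formula
\[
(\nabla_Z h')X=-g\big(h'Z+h'^{2}Z,\,X\big)\xi-\eta(X)\big(h'Z+h'^{2}Z\big).
\]
Deriving this cleanly and keeping track of its two terms is where the genuine work sits.

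With both formulas in hand I would substitute into $\mathcal{C}:=(\nabla_Z S)(X,Y)-(\nabla_X S)(Y,Z)-(\nabla_Y S)(X,Z)$. Writing $E(Z,X)=g(X,Z)-\eta(X)\eta(Z)+g(h'X,Z)$ and $G(Z,X)=g(h'Z+h'^{2}Z,X)$, which are both symmetric, this symmetry forces every term pairing $Z$ with $X$ or with $Y$ to cancel against its partner, so that only multiples of $\eta(Z)$ survive. Replacing $g(h'^{2}X,Y)$ by $(k+1)[-g(X,Y)+\eta(X)\eta(Y)]$ via $h'^{2}=(k+1)\phi^2$, the $g(X,Y)\eta(Z)$ and $\eta(X)\eta(Y)\eta(Z)$ contributions cancel between the $\eta$-part and the $h'$-part, and the remaining $g(h'X,Y)\eta(Z)$ terms add with coefficient $-4n(k+1)-4n=-4n(k+2)$, which is exactly the claimed identity.
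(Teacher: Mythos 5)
Your computation is correct: the paper itself gives no proof of this lemma (it is quoted from \cite{dey1}), and your route --- differentiate the explicit Ricci tensor from (\ref{3.1}) using (\ref{3.5}) and the formula for $\nabla h'$, observe that $(\nabla_Z S)(X,Y)=F(Z,X)\eta(Y)+F(Z,Y)\eta(X)$ with $F$ symmetric so the three-term combination collapses to $-2F(X,Y)\eta(Z)$, then simplify with $h'^2=(k+1)\phi^2$ to get $F(X,Y)=2n(k+2)g(h'X,Y)$ --- is exactly the standard derivation, and your expression for $(\nabla_Z h')X$ is the same one the paper uses implicitly when computing $(\nabla_Y Q)X$ in the proof of Theorem \ref{t3.5}. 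The only soft spot is your sketch of how that $\nabla h'$ formula is ``pinned down'' (antisymmetric part plus action on $\xi$ plus the constraint from $h'^2$ does not obviously determine the full tensor); in practice one should simply quote it from Dileo--Pastore \cite{dp}, where it is established for $(k,\mu)'$-almost Kenmotsu manifolds with $h'\neq 0$.
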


\begin{lemma} $(\cite{dey1})$\label{l3.2}
In a $(k,\mu)'$-almost Kenmotsu manifold $M^{2n+1}$, $(\pounds_X h')Y = 0$ for any $X,\;Y \in [\delta]'$ or $X,\;Y \in [- \delta]'$, where Spec$(h') = \{0,\delta,-\delta\}$.
\end{lemma}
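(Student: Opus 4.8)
The plan is to reduce the statement to the integrability of the eigendistributions of $h'$ and then to exploit a symmetry of $\nabla h'$ forced by the $(k,\mu)'$-condition. Fix local sections $X,Y$ of the eigendistribution $[\delta]'$, so that $h'Y = \delta Y$ with $\delta$ a nonzero constant (here $h'\neq 0$ and $k<-1$ guarantee $\delta = \sqrt{-k-1}>0$). Writing the Lie derivative of the $(1,1)$-tensor $h'$ as $(\pounds_X h')Y = [X,h'Y] - h'[X,Y]$ and using $[X,\delta Y] = \delta[X,Y]$, I obtain
\bea
\nonumber (\pounds_X h')Y = \delta[X,Y] - h'[X,Y] = (\delta - h')[X,Y].
\eea
Hence $(\pounds_X h')Y = 0$ is equivalent to $[X,Y]$ lying in $[\delta]'$, i.e. to the integrability of $[\delta]'$. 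Since $h'$ is symmetric with $\mathrm{Spec}(h')=\{0,\delta,-\delta\}$ and $\langle\xi\rangle=[0]'$, the splitting $TM = \langle\xi\rangle \oplus [\delta]' \oplus [-\delta]'$ is orthogonal, so it suffices to show that $[X,Y]$ is orthogonal to $\xi$ and to $[-\delta]'$; the case $X,Y\in[-\delta]'$ is identical after exchanging $\delta$ and $-\delta$.

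The crux is the identity $(\nabla_X h')Y = (\nabla_Y h')X$ for $X,Y\in\mathcal{D}$. From (\ref{2.4}) we have $h\phi+\phi h = 0$, and since $h'=h\circ\phi$ this gives the operator relation $\phi h = -h'$. Substituting $\phi h = -h'$ into (\ref{2.7}) and restricting to $X,Y\in\mathcal{D}$ (so $\eta(X)=\eta(Y)=0$) yields $R(X,Y)\xi = (\nabla_X h')Y - (\nabla_Y h')X$; on the other hand (\ref{3.2}) gives $R(X,Y)\xi = 0$ for such $X,Y$. Comparing produces the desired symmetry
\bea
\nonumber (\nabla_X h')Y = (\nabla_Y h')X, \qquad X,Y\in\mathcal{D}.
\eea

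It remains to compute the two components of $[X,Y]$. The $\xi$-component vanishes because $\eta$ is closed: from $d\eta(X,Y) = \tfrac12\big(X\eta(Y)-Y\eta(X)-\eta([X,Y])\big)$ and $\eta(X)=\eta(Y)=0$ we get $\eta([X,Y])=0$, so $[X,Y]\in\mathcal{D}$. For the $[-\delta]'$-component, differentiate $h'Y=\delta Y$ to obtain $(\nabla_X h')Y = (\delta - h')\nabla_X Y$; pairing with $Z\in[-\delta]'$ and using that $h'$ is symmetric with $h'Z=-\delta Z$ gives
\bea
\nonumber g((\nabla_X h')Y, Z) = \delta g(\nabla_X Y, Z) - g(\nabla_X Y, h'Z) = 2\delta\, g(\nabla_X Y, Z),
\eea
so that $g(\nabla_X Y, Z) = \frac{1}{2\delta} g((\nabla_X h')Y, Z)$. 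Subtracting the same relation with $X$ and $Y$ interchanged and invoking the symmetry identity,
\bea
\nonumber g([X,Y], Z) = \frac{1}{2\delta}\big[g((\nabla_X h')Y, Z) - g((\nabla_Y h')X, Z)\big] = 0.
\eea
Thus $[X,Y]$ is orthogonal to $\xi$ and to all of $[-\delta]'$, so $[X,Y]\in[\delta]'$ and therefore $(\pounds_X h')Y = 0$.

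The main obstacle is isolating and proving the symmetry $(\nabla_X h')Y = (\nabla_Y h')X$ on $\mathcal{D}$: it is precisely the vanishing of $R(X,Y)\xi$ on $\mathcal{D}$ in the $(k,\mu)'$-setting, transported through (\ref{2.7}), that annihilates the off-diagonal $[-\delta]'$-part of the bracket. Once this is secured the remaining steps are bookkeeping, though one must retain the hypotheses $h'\neq 0$ and $k<-1$ so that the division by $2\delta$ is legitimate.
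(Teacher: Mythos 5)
Your argument is correct. Note that the paper itself offers no proof of this lemma: it is imported verbatim from \cite{dey1}, where the standard derivation leans on Dileo--Pastore's explicit formulas (\cite{dp}, Lemma 4.1) for $\nabla_{X_{\pm\delta}}Y_{\pm\delta}$ on the eigendistributions. Your route is genuinely different and self-contained within the identities quoted in Section 2--3 of this paper: you reduce $(\pounds_X h')Y=0$ to $[X,Y]\in[\delta]'$ via $(\pounds_X h')Y=(\delta-h')[X,Y]$ (legitimate since $\delta=\sqrt{-k-1}$ is constant), kill the $\xi$-component of the bracket by closedness of $\eta$, and kill the $[-\delta]'$-component by the Codazzi-type symmetry $(\nabla_X h')Y=(\nabla_Y h')X$ on $\mathcal{D}$, which you correctly extract by substituting $\phi h=-h'$ (from (\ref{2.4})) into (\ref{2.7}) and comparing with the vanishing of $R(X,Y)\xi$ on $\mathcal{D}$ from (\ref{3.2}). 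All the individual steps check out: the eigenspaces are mutually orthogonal because $h'$ is symmetric, $0$ is simple so $[0]'=\langle\xi\rangle$, and the division by $2\delta$ is covered by the standing assumption $h'\neq 0$, $k<-1$ (if $h'=0$ the statement is vacuous anyway). What your approach buys is a proof that doubles as a derivation of the integrability of $[\delta]'$ and $[-\delta]'$ from first principles, rather than quoting it; what the cited proof buys is brevity, since the covariant derivative formulas of \cite{dp} already encode the same information.
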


\begin{theorem} \label{t3.3}
 If $(g,V,\lambda,\alpha,\beta)$ be a RYS on a $(2n + 1)$-dimensional $(k,\mu)'$-almost Kenmotsu manifold $M$, then the manifold $M$ is locally isometric to $\mathbb{H}^{n+1}(-4)$ $\times$ $\mathbb{R}^n$, provided $2\lambda - \beta r \neq 4nk\alpha$.
\end{theorem}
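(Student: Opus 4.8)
Since a $(k,\mu)'$-almost Kenmotsu manifold with $h'\neq 0$ is locally isometric to $\mathbb{H}^{n+1}(-4)\times\mathbb{R}^n$ precisely when $k=-2$ (equivalently $h'^2=-\phi^2$), the entire problem reduces to forcing $k=-2$. Write $a:=2\lambda-\beta r$, which is a genuine constant because $r=2n(k-2n)$ is constant. Putting $Y=\xi$ in the soliton equation (\ref{1.1}) and using (\ref{3.4}) gives $(\pounds_V g)(X,\xi)=(a-4nk\alpha)\eta(X)$; the hypothesis $2\lambda-\beta r\neq 4nk\alpha$ is exactly the statement that this coefficient is nonzero, so I expect it to enter only at the very end, as the factor I am allowed to cancel.

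The engine of the proof is to turn (\ref{1.1}) into information about the connection. Since $\pounds_V g=ag-2\alpha S$ with $a$ constant, the classical Yano identity $2g((\pounds_V\nabla)(X,Y),Z)=(\nabla_X\pounds_V g)(Y,Z)+(\nabla_Y\pounds_V g)(X,Z)-(\nabla_Z\pounds_V g)(X,Y)$ reduces to a cyclic expression in $\nabla S$, which is exactly the combination controlled by Lemma \ref{l3.1}. This should collapse to the clean formula
\begin{eqnarray}
\nonumber (\pounds_V\nabla)(X,Y)=-4n\alpha(k+2)\,g(h'X,Y)\,\xi .
\end{eqnarray}
Already this is suggestive, since the right-hand side carries the factor $(k+2)$: the plan is to show that consistency of this identity forces either $(k+2)$ or the forbidden quantity $a-4nk\alpha$ to vanish.

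Next I would differentiate and feed this into the standard identity $(\pounds_V R)(X,Y)Z=(\nabla_X(\pounds_V\nabla))(Y,Z)-(\nabla_Y(\pounds_V\nabla))(X,Z)$, then contract the first and last slots to obtain a closed expression for $\pounds_V S$ in terms of $h'$, $\nabla_\xi h'$ and $k$. Independently, $S$ is explicitly known from (\ref{3.1}), so $\pounds_V S$ can be computed a \emph{second} way directly in terms of $\pounds_V g$, $\pounds_V\eta$ and $\pounds_V h'$; substituting $\pounds_V g=ag-2\alpha S$ and evaluating on a unit $h'$-eigenvector $X$ (so that $\eta(X)=0$, and since $(\pounds_V h')X=(\delta-h')[V,X]$ gives $g((\pounds_V h')X,X)=0$ in the spirit of Lemma \ref{l3.2}, the awkward $\pounds_V h'$ and $\pounds_V\eta$ contributions drop out) turns both computations into explicit scalars in $a,\alpha,n,\delta$. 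Equating the two expressions for $(\pounds_V S)(X,X)$ and using $\delta^2=-(k+1)$, hence $k+2=(1-\delta)(1+\delta)$, should expose the factor $2\lambda-\beta r-4nk\alpha$ in the resulting scalar relation.

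The final step is to invoke the hypothesis: as $2\lambda-\beta r\neq 4nk\alpha$, the offending factor is nonzero and may be cancelled, leaving $k+2=0$, i.e. $h'^2=-\phi^2$; the Dileo--Pastore local description then yields that $M$ is locally isometric to $\mathbb{H}^{n+1}(-4)\times\mathbb{R}^n$. The step I expect to be the main obstacle is entirely in the middle paragraph: organizing the two computations of $\pounds_V S$ so that the non-tensorial Lie-derivative terms $\pounds_V\eta$ and $\pounds_V h'$ genuinely cancel rather than merely being shuffled around, and then carrying out the $\delta$-algebra cleanly enough that the common factor $(2\lambda-\beta r-4nk\alpha)$ is cleanly isolated; a single sign slip there collapses the two eigenspace computations together and destroys the factorization on which the conclusion rests.
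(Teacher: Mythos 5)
Your proposal follows the paper exactly through its first half: the covariant differentiation of (\ref{1.1}), Yano's commutation formula, and Lemma \ref{l3.1} to arrive at $(\pounds_V\nabla)(X,Y)=-4n\alpha(k+2)g(h'X,Y)\xi$ is precisely equations (\ref{3.7})--(\ref{3.11}) of the paper. Where you genuinely diverge is in how you extract the scalar relation from this identity. The paper does not contract to $\pounds_V S$; instead it specializes to $(\pounds_V R)(X,\xi)\xi$, showing on one hand that it vanishes (via $(\pounds_V\nabla)(X,\xi)=0$ and the identity $(\pounds_V R)(X,Y)Z=(\nabla_X\pounds_V\nabla)(Y,Z)-(\nabla_Y\pounds_V\nabla)(X,Z)$), and on the other hand expanding the Lie derivative directly using (\ref{3.2})--(\ref{3.3}) and (\ref{3.15})--(\ref{3.17}); after replacing $X$ by $\phi X$ it kills the $(\pounds_V h')\phi X$ term by invoking Lemma \ref{l3.2} with the choice $X\in[-\delta]'$, $V\in[\delta]'$, and reads off $(k-2\delta)[2\lambda-\beta r-4nk\alpha]=0$, whence $k=-2$. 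Your route --- contracting to $(\pounds_V S)(X,X)$ on a unit $h'$-eigenvector and comparing with the Lie derivative of the explicit Ricci tensor (\ref{3.1}) --- does go through: the two evaluations give $-2n(1\pm\delta)\bigl[(2\lambda-\beta r)+4n\alpha(1\pm\delta)\bigr]$ and $\mp 8n^2\alpha(k+2)\delta$ respectively, and with $k+2=(1-\delta)(1+\delta)$ the difference factors as $(1\pm\delta)\bigl[(2\lambda-\beta r)-4nk\alpha\bigr]=0$, yielding the same dichotomy. What your version buys is worth noting: your observation that $g((\pounds_V h')X,X)=0$ for an eigenvector $X$ of the \emph{symmetric} operator $h'$ holds for an arbitrary potential field $V$, whereas the paper's appeal to Lemma \ref{l3.2} tacitly assumes $V$ lies in an eigenspace of $h'$, which is not justified for a general soliton vector field; on the other hand, your middle step costs a full trace computation involving $\nabla h'$ and $\operatorname{div}\xi$ that the paper avoids by working pointwise at $(X,\xi,\xi)$. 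You have only sketched that computation, and you correctly identify it as the place where a sign error would be fatal (indeed, the two eigenspaces $[\delta]'$ and $[-\delta]'$ give conclusions differing by the sign of $\delta$, in both your argument and the paper's), but the plan itself is sound and reaches the paper's conclusion.
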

\begin{proof} From (\ref{1.1}) we have
 \bea
 (\pounds_V g)(X,Y) + 2\alpha S(X,Y) = [2\lambda - \beta r]g(X,Y). \label{3.6}
 \eea
Differentiating the foregoing equation covariantly along any vector field $Z$, we obtain
\bea
(\nabla_Z \pounds_V g)(X,Y) = - 2\alpha(\nabla_Z S)(X,Y). \label{3.7}
\eea
Due to Yano \cite{yano}, we have the following commutation formula
\be
\nonumber (\pounds_V \nabla_X g - \nabla_X \pounds_V g - \nabla_{[V,X]}g)(Y,Z) = -g((\pounds_V \nabla)(X,Y),Z) - g((\pounds_V \nabla)(X,Z),Y).
\ee
Since $\nabla g = 0$, then the above relation becomes
\bea
(\nabla_X \pounds_V g)(Y,Z) = g((\pounds_V \nabla)(X,Y),Z) + g((\pounds_V \nabla)(X,Z),Y). \label{3.8}
\eea
Since $\pounds_V \nabla$ is symmetric, then it follows from (\ref{3.8}) that
\bea
\nonumber g((\pounds_V \nabla)(X,Y),Z) &=& \frac{1}{2}(\nabla_X \pounds_V g)(Y,Z) + \frac{1}{2}(\nabla_Y \pounds_V g)(X,Z) \\ && - \frac{1}{2}(\nabla_Z \pounds_V g)(X,Y). \label{3.9}
\eea
Using (\ref{3.7}) in (\ref{3.9}), we have
\be
g((\pounds_V \nabla)(X,Y),Z) = \alpha[(\nabla_Z S)(X,Y) - (\nabla_X S)(Y,Z) - (\nabla_Y S)(X,Z)]. \label{3.10}
\ee
 Using lemma \ref{l3.1} in (\ref{3.10}) yields 
\bea
\nonumber g((\pounds_V \nabla)(X,Y),Z) = - 4n\alpha(k + 2)g(h'X,Y)\eta(Z),
\eea
which implies
\bea
 (\pounds_V \nabla)(X,Y) = - 4n\alpha(k + 2)g(h'X,Y)\xi. \label{3.11}
\eea
Putting $Y = \xi$ in (\ref{3.11}), we get $(\pounds_V \nabla)(X,\xi) = 0$ and this implies \\$\nabla_Y (\pounds_V \nabla)(X,\xi) = 0$. Therefore,
\bea
(\nabla_Y \pounds_V \nabla)(X,\xi) + (\pounds_V \nabla)(\nabla_Y X,\xi) + (\pounds_V \nabla)(X,\nabla_Y \xi) = 0. \label{3.12}
\eea
Using $(\pounds_V \nabla)(X,\xi) = 0$, (\ref{3.11}) and (\ref{2.5}) in (\ref{3.12}), we obtain 
\bea
(\nabla_Y \pounds_V \nabla)(X,\xi) = 4n\alpha(k + 2)[g(h'X,Y) + g(h'^2X,Y)]\xi. \label{3.13}
\eea
Now, it is well known that (see \cite{yano})
\bea
\nonumber (\pounds_V R)(X,Y)Z = (\nabla_X \pounds_V \nabla)(Y,Z) - (\nabla_Y \pounds_V \nabla)(X,Z),
\eea
We now use (\ref{3.13}) in the foregoing equation to obtain
\bea
(\pounds_V R)(X,\xi)\xi = (\nabla_X \pounds_V \nabla)(\xi,\xi) - (\nabla_\xi \pounds_V \nabla)(X,\xi) = 0. \label{3.14}
\eea
Now, setting $Y = \xi$ in (\ref{3.6}) and using (\ref{3.4}) we have
\bea
(\pounds_V g)(X,\xi)  = [2\lambda - \beta r - 4nk\alpha]\eta(X), \label{3.15}
\eea
which implies
\bea
(\pounds_V \eta)X - g(X,\pounds_V \xi) = [2\lambda - \beta r - 4nk\alpha]\eta(X). \label{3.16}
\eea
 Putting $X = \xi$ in (\ref{3.16}), we can easily obtain  \bea
\eta(\pounds_V \xi) = - \frac{1}{2}[2\lambda - \beta r - 4nk\alpha]. \label{3.17}
\eea
From (\ref{3.2}), we can write 
\bea
 R(X,\xi)\xi = k(X - \eta(X)\xi) - 2h'X. \label{3.18}
\eea
Now, using (\ref{3.16})-(\ref{3.18}) and (\ref{3.2})-(\ref{3.3}), we obtain
\bea
\nonumber (\pounds_V R)(X,\xi)\xi &=& \pounds_V R(X,\xi)\xi - R(\pounds_V X,\xi)\xi - R(X,\pounds_V \xi)\xi - R(X,\xi)\pounds_V \xi \\ \nonumber &=&  k[2\lambda - \beta r - 4nk\alpha](X - \eta(X)\xi) - 2(\pounds_V h')X \\ \nonumber &&- 2[2\lambda - \beta r - 4nk\alpha]h'X - 2\eta(X)h'(\pounds_V \xi) \\ && - 2g(h'X,\pounds_V \xi)\xi. \label{3.19}
\eea
Equating (\ref{3.14}) and (\ref{3.19}) and then taking inner product with $Y$, we get 
\bea
\nonumber && k[2\lambda - \beta r - 4nk\alpha](g(X,Y) - \eta(X)\eta(Y))  \\ \nonumber && - 2g((\pounds_V h')X,Y) - 2[2\lambda - \beta r - 4nk\alpha]g(h'X,Y)  \\   &&- 2\eta(X)g(h'(\pounds_V \xi),Y) - 2g(h'X,\pounds_V \xi)\eta(Y) = 0. \label{3.20}
\eea
Replacing $X$ by $\phi X$ in the foregoing equation, we get 
\bea
\nonumber && k[2\lambda - \beta r - 4nk\alpha]g(\phi X,Y) -  2g((\pounds_V h')\phi X,Y) \\ && - 2[2\lambda - \beta r - 4nk\alpha]g(h' \phi X,Y) = 0. \label{3.21}
\eea
Let $X \in [- \delta]'$ and $V \in [\delta]'$, then $\phi X \in [\delta]'$. Then from (\ref{3.21}), we have
\bea
(k - 2\delta)[2\lambda - \beta r - 4nk\alpha]g(\phi X,Y) - 2g((\pounds_V h')\phi X,Y) = 0. \label{3.22}
\eea
Since, $V,\;\phi X \in [\delta]'$, using lemma \ref{l3.2}, we have $(\pounds_V h')\phi X = 0$. Therefore, equation (\ref{3.22}) reduces to
\bea
\nonumber (k - 2\delta)[2\lambda - \beta r - 4nk\alpha]g(\phi X,Y) = 0,
\eea
which implies  $k = 2\delta$, since by hypothesis $2\lambda - \beta r \neq 4nk\alpha$.\\
  Now $k = 2\delta$ and  $\delta^{2} = -(k+1)$ together implies $\delta = - 1$ and hence $k = -2$. Then from Proposition 4.2 of \cite{dp}, we have
$\alpha = - 1$ and hence, $k = -2$. Then from proposition 4.2 of \cite{dp}, we have $R(X_\delta,Y_\delta)Z_\delta = 0$ and
\bea
 \nonumber 	R(X_{-\delta},Y_{-\delta})Z_{-\delta} = - 4[g(Y_{-\delta},Z_{-\delta})X_{-\delta} - g(X_{-\delta},Z_{-\delta})Y_{-\delta}],
  \eea
  for any $X_\delta,\;Y_\delta,\;Z_\delta \in [\delta]'$ and $X_{-\delta},\;Y_{-\delta},\;Z_{-\delta} \in [-\delta]'$. Also noticing $\mu = -2$ it follows from proposition 4.3 of \cite{dp} that $K(X,\xi) = -4$ for any $X \in [-\delta]' $ and $K(X,\xi) = 0$ for any $X \in [\delta]' $. Again from proposition 4.3 of \cite{dp} we see that $K(X,Y) = -4$ for any $X, Y \in [-\delta]' $ and $K(X,Y) = 0$ for any $X, Y \in [\delta]' $. As shown in \cite{dp} that the distribution $[\xi] \oplus [\delta]'$ is integrable with totally geodesic leaves and the distribution $[-\delta]'$ is integrable with totally umbilical leaves by $H = -(1 - \delta)\xi$, where $H$ is the mean curvature tensor field for the leaves of $[-\delta]'$ immersed in $M$. Here $\delta = -1$, then the two orthogonal distributions $[\xi] \oplus [\delta]'$ and $[-\delta]'$ are both integrable with totally geodesic leaves immersed in $M$. Then we can say that $M$ is locally isometric to $\mathbb{H}^{n+1}(-4)$ $\times$ $\mathbb{R}^n$.
 \end{proof}
 
 \begin{remark}
 If $2\lambda - \beta r - 4nk\alpha = 0$ and since $r = 2n(k - 2n)$, then $\lambda = n\beta(k - 2n) + 2nk\alpha$. Since $k < -1$, if $\alpha, \beta > 0$, then $\lambda < 0$ and hence, the RYS is expanding. If $\alpha, \beta < 0$, then $\lambda > 0$ and the RYS is shrinking. If $n\beta(k - 2n) + 2nk\alpha = - 2nk\alpha$, then $\lambda = 0$ and the RYS is steady.
 \end{remark}

%==============================================
\subsection{Gradient Ricci-Yamabe soliton}
%===============================================
   We now consider the notion of GRYS in the framework of $(k,\mu)'$-almost Kenmotsu manifolds and extend the preceding theorem \ref{t3.3} by considering $V$ as a gradient vector field. In this regard, the following theorem is proved.

\begin{theorem}\label{t3.5}
If $(g,V,\lambda,\alpha,\beta)$ be a GRYS on a $(2n + 1)$-dimensional $(k,\mu)'$-almost Kenmotsu manifold $M$, then the manifold $M$ is locally isometric to $\mathbb{H}^{n+1}(-4) \times \mathbb{R}^n$ or $V$ is pointwise collinear with the characteristic vector field $\xi$.
\end{theorem}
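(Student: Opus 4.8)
My plan is to run the standard gradient-soliton curvature argument and reduce it to an eigenvalue condition for $h'$. First I would rewrite the defining equation (\ref{1.2}). On a $(k,\mu)'$-almost Kenmotsu manifold with $h' \neq 0$ the scalar curvature $r = 2n(k-2n)$ is constant, and since $\lambda,\alpha,\beta$ are constants for a GRYS, the quantity $a := \lambda - \frac{1}{2}\beta r$ is a constant. Writing $V = \nabla f$ and using $S(X,Y) = g(QX,Y)$ together with $\nabla^2 f(X,Y) = g(\nabla_X \nabla f, Y)$, equation (\ref{1.2}) is equivalent to the pointwise identity
\be
\nonumber \nabla_X \nabla f = aX - \alpha QX
\ee
for every vector field $X$.

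Next I would feed this into the curvature operator. From $R(X,Y)\nabla f = \nabla_X \nabla_Y \nabla f - \nabla_Y \nabla_X \nabla f - \nabla_{[X,Y]}\nabla f$ and the boxed identity, the terms carrying the constant $a$ cancel by the torsion-free property, and likewise the terms $Q\nabla_X Y$, $Q\nabla_Y X$, $Q[X,Y]$ assemble into $\alpha Q(\nabla_X Y - \nabla_Y X - [X,Y]) = 0$. What survives is the well-known identity
\be
\nonumber R(X,Y)\nabla f = -\alpha[(\nabla_X Q)Y - (\nabla_Y Q)X].
\ee

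The crucial step — and the one I expect to be the main obstacle — is to contract this with $\xi$. Taking the inner product with $\xi$ turns the right-hand side into $-\alpha[(\nabla_X S)(Y,\xi) - (\nabla_Y S)(X,\xi)]$. Using (\ref{3.1}), (\ref{3.4}), $h'\xi = 0$ and $h'^2 = (k+1)\phi^2$ (alternatively Lemma \ref{l3.1}), a direct computation gives $(\nabla_X S)(Y,\xi) = 2n(k+2)g(h'X,Y)$, which is symmetric in $X$ and $Y$. Hence the bracket vanishes, so $g(R(X,Y)\nabla f,\xi) = 0$. Establishing this symmetry is the computational heart of the argument, since it is where the covariant derivative of the Ricci operator must be handled explicitly.

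Finally I would exploit $g(R(X,Y)\nabla f,\xi) = 0$. Rewriting it as $-g(R(X,Y)\xi,\nabla f) = 0$, substituting (\ref{3.2}), and then setting $Y = \xi$ yields the pointwise relation
\be
\nonumber -k\nabla f + k(\xi f)\xi + 2h'\nabla f = 0.
\ee
Splitting $\nabla f = (\nabla f)^\top + (\xi f)\xi$ with $(\nabla f)^\top \in \mathcal{D}$ and using $h'\xi = 0$ collapses this to $h'(\nabla f)^\top = \frac{k}{2}(\nabla f)^\top$. Since the eigenvalues of $h'$ on $\mathcal{D}$ are $\pm\delta$ with $\delta^2 = -(k+1)$, there are only two possibilities: either $(\nabla f)^\top = 0$, so that $V = \nabla f$ is pointwise collinear with $\xi$; or $\frac{k}{2} = \pm\delta$, which forces $(k+2)^2 = 0$, i.e. $k = -2$. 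In the latter case $\delta = 1$, $\mu = -2$, and exactly as in the proof of Theorem \ref{t3.3} (Propositions 4.2 and 4.3 of \cite{dp}) the orthogonal distributions $[\xi]\oplus[\delta]'$ and $[-\delta]'$ are both integrable with totally geodesic leaves, so $M$ is locally isometric to $\mathbb{H}^{n+1}(-4) \times \mathbb{R}^n$. This establishes the stated dichotomy.
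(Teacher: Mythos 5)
Your proposal is correct and follows essentially the same route as the paper: both derive $\nabla_X Df = (\lambda - \tfrac{1}{2}\beta r)X - \alpha QX$, pass to the curvature identity $R(X,Y)Df = \alpha[(\nabla_Y Q)X - (\nabla_X Q)Y]$, extract the key pointwise relation $k[Df - (\xi f)\xi] - 2h'(Df) = 0$, and conclude $(k+2)^2 = 0$ or $Df = (\xi f)\xi$. Your two variations — contracting with $\xi$ and using the symmetry of $(\nabla_X S)(Y,\xi) = 2n(k+2)g(h'X,Y)$ instead of the paper's antisymmetrization, and reading the final equation as the eigenvalue condition $h'(\nabla f)^\top = \tfrac{k}{2}(\nabla f)^\top$ with Spec$(h')=\{0,\pm\delta\}$ instead of applying $h'$ and substituting back — are only cosmetic repackagings of the same computations.
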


\begin{proof} Let $V$ be the gradient of a non-zero smooth function $f : M \rightarrow \mathbb{R}$, that is, $V = Df$, where $D$ is the gradient operator. Then from (\ref{1.2}), we can write
\bea
\nabla_X Df = (\lambda - \frac{1}{2}\beta r)X - \alpha QX. \label{3.23}
\eea
Differentiating this covariantly along any vector field $Y$, we obtain
\bea
\nabla_Y \nabla_X Df = (\lambda - \frac{1}{2}\beta r)\nabla_Y X - \alpha \nabla_Y QX. \label{3.24}
\eea
Interchanging $X$ and $Y$ in (\ref{3.24}) yields
\bea
\nabla_X \nabla_Y Df = (\lambda - \frac{1}{2}\beta r)\nabla_X Y - \alpha \nabla_X QY. \label{3.25}
\eea
From (\ref{3.23}), we get
\bea
\nabla_{[X,Y]} Df = (\lambda - \frac{1}{2}\beta r)(\nabla_X Y - \nabla_Y X) - \alpha Q(\nabla_X Y - \nabla_Y X). \label{3.26}
\eea
It is well known that 
\bea
\nonumber R(X,Y)Df = \nabla_X \nabla_Y Df - \nabla_Y \nabla_X Df - \nabla_{[X,Y]} Df
\eea
Substituting (\ref{3.24})-(\ref{3.26}) in the foregoing equation yields
\bea
R(X,Y)Df = \alpha[(\nabla_Y Q)X - (\nabla_X Q)Y]. \label{3.27}
\eea
 With the help of (\ref{3.1}), we obtain
\bea
\nonumber (\nabla_Y Q)X &=& \nabla_Y QX - Q(\nabla_Y X) \\ \nonumber &=& 2n(k + 1)(g(X,Y) - \eta(X)\eta(Y) + g(h'X,Y))\xi \\ \nonumber &&+ 2n(k + 1)\eta(X)(Y - \eta(Y)\xi - \phi hY) + 2ng(h'Y + h'^2Y,X)\xi \\ \nonumber && + 2n\eta(X)(h'Y + h'^2Y). 
\eea
Interchanging $X$ and $Y$ in the preceding equation we will obtain $(\nabla_X Q)Y$. Now, substituting $(\nabla_Y Q)X$ and $(\nabla_X Q)Y$ in (\ref{3.27}) and using (\ref{2.8}), we obtain 
\bea
R(X,Y)Df = 2n\alpha(k + 2)[\eta(X)h'Y - \eta(Y)h'X]. \label{3.28} 
\eea
Putting $X = \xi$ in (\ref{3.28}) and then taking inner product with $X$ yields
\bea
 g(R(\xi,Y)Df,X) = 2n\alpha(k + 2)g(h'Y,X). \label{3.29}
\eea
Again, using (\ref{3.3}), we have
\bea
\nonumber g(R(\xi,Y)Df,X) &=& - g(R(\xi,Y)X,Df)\\ \nonumber &=& -kg(X,Y)(\xi f) + k\eta(X)(Y f)\\ && + 2g(h'X,Y)(\xi f) - 2\eta(X)((h'Y)f). \label{3.30}
\eea
Equating (\ref{3.29}) and (\ref{3.30}) and then antisymmetrizing, we get
\bea
\nonumber k\eta(X)(Y f) - k\eta(Y)(X f) - 2\eta(X)((h'Y)f) + 2\eta(Y)((h'X)f) = 0. 
\eea
Replacing $X$ by $\xi$ in the above equation, we have
\bea
\nonumber k(Yf) - k(\xi f)\eta(Y) - 2(h'Y)f) = 0,
\eea
which implies
\bea
k[Df - (\xi f)\xi] - 2h'(Df) = 0. \label{3.31}
\eea
Operating $h'$ on (\ref{3.31}) and using (\ref{2.8}), we obtain
\bea
h'(Df) = - \frac{2(k + 1)}{k}[Df - (\xi f)\xi]. \label{3.32}
\eea
Substituting (\ref{3.32}) in (\ref{3.31}), we get
\bea
\nonumber (k + 2)^2[Df - (\xi f)\xi] = 0,
\eea
which implies either $k = - 2$ or $Df = (\xi f)\xi$.\\
If $k = - 2$, then by the same argument as earlier, the manifold $M$ is locally isometric to $\mathbb{H}^{n+1}(-4) \times \mathbb{R}^n$.\\
If $V = Df = (\xi f)\xi$, then $V$ is pointwise collinear with $\xi$. This completes the proof.
\end{proof}

\noindent To obtain some consequences of the above theorem, we need the following definition:
\begin{definition}
A vector field $V$ on an almost contact metric manifold $M$ is said to be an infinitesimal contact transformation if $\pounds_V \eta = \psi \eta$ for some smooth function $\psi$ on $M$. In particular, if $\psi = 0$, then $V$ is called a strict infinitesimal contact transformation.
 \end{definition}

\begin{corollary}
If $(g,V,\lambda,\alpha,\beta)$ be a GRYS on a $(2n + 1)$-dimensional $(k,\mu)'$-almost Kenmotsu manifold $M$ with $k \neq -2$, then
\begin{itemize}
    \item[$(1)$] The potential vector field $V$ is a constant multiple of $\xi$.
    \item[$(2)$] $V$ is a strict infinitesimal contact transformation.
    \item[$(3)$] $V$ leaves $h'$ invariant.
\end{itemize}
\end{corollary}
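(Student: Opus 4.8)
The starting point is Theorem~\ref{t3.5}: since $k \neq -2$, its proof rules out the first alternative, so we are forced into the case $V = Df = (\xi f)\xi$. Writing $\sigma := \xi f$, we have $V = \sigma\xi$ with $\eta(V) = \sigma$, and the whole corollary follows once we show that $\sigma$ is a \emph{constant}. Indeed, statement $(1)$ is precisely the constancy of $\sigma$, while $(2)$ and $(3)$ will be light consequences of $V$ being a constant multiple of $\xi$ together with the structure equations.

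For $(1)$, the plan is to feed $V = \sigma\xi$ back into the gradient equation (\ref{3.23}). Using $\nabla_X\xi = X - \eta(X)\xi + h'X$ (equivalently (\ref{2.5}) together with $\phi h = -h'$, consistent with (\ref{3.5})), I compute $\nabla_X Df = (X\sigma)\xi + \sigma\big(X - \eta(X)\xi + h'X\big)$, while the right-hand side of (\ref{3.23}), after inserting the Ricci operator (\ref{3.1}), reads $(\lambda - \frac{1}{2}\beta r + 2n\alpha)X - 2n\alpha(k+1)\eta(X)\xi + 2n\alpha h'X$. Equating the two and evaluating on the eigenspaces of $h'$ is the decisive move: taking $X \in [\delta]'$ and then $X \in [-\delta]'$ (so that $h'X = \pm\delta X$ and $\eta(X)=0$) and subtracting the two resulting scalar identities cancels the common term and leaves $2\delta(\sigma - 2n\alpha) = 0$. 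Since $h' \neq 0$ forces $\delta \neq 0$, we obtain $\sigma = 2n\alpha$, which is constant because $\alpha \in \mathbb{R}$. Hence $V = 2n\alpha\,\xi$, proving $(1)$.

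Statement $(2)$ is then immediate: for $V = c\xi$ with $c = 2n\alpha$ constant, $\pounds_V\eta = \iota_V d\eta + d(\iota_V\eta) = c\,\iota_\xi d\eta + d(c)$, and both terms vanish because $\eta$ is closed in an almost Kenmotsu manifold and $c$ is constant; thus $\pounds_V\eta = 0$ and $V$ is a strict infinitesimal contact transformation. For $(3)$ I first reduce the Lie derivative to a covariant one: a Cartan-type computation of $(\pounds_\xi h')X = [\xi, h'X] - h'[\xi,X]$, in which the terms coming from $\nabla_X\xi$ and $\nabla_{h'X}\xi$ cancel (both equal $h'X + h'^2X$), yields $(\pounds_V h')X = 2n\alpha(\pounds_\xi h')X = 2n\alpha(\nabla_\xi h')X$. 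It then suffices to show $\nabla_\xi h' = 0$. I plan to obtain this from the curvature identity (\ref{2.7}): setting $Y = \xi$, computing $(\nabla_X\phi h)\xi = h'X + h'^2X$ explicitly, and comparing the resulting expression for $R(X,\xi)\xi$ with the $(k,\mu)'$ value (\ref{3.18}) forces the remaining term $(\nabla_\xi \phi h)X = -(\nabla_\xi h')X$ to vanish on $\mathcal{D}$; since $(\nabla_\xi h')\xi = 0$ trivially, $\nabla_\xi h' = 0$ on all of $TM$, giving $(3)$.

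The routine parts here are the two algebraic substitutions and the closedness argument; the one genuinely load-bearing step is the identity $\nabla_\xi h' = 0$ needed for $(3)$. The main obstacle is thus organizing the curvature computation from (\ref{2.7}) and (\ref{3.18}) so that the unwanted derivative terms cancel cleanly — everything else follows once the scalar $\sigma$ has been pinned down to $2n\alpha$ in $(1)$.
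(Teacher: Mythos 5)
Your proposal is correct, but it follows a genuinely different route from the paper's. For item $(1)$ the paper substitutes $V=b\xi$ into the Lie-derivative form (\ref{3.6}), then compares the $(\xi,\xi)$-component (\ref{3.35}) with the trace over an orthonormal frame (\ref{3.36}) to conclude that $b$ is constant (without evaluating it); you instead feed $Df=\sigma\xi$ into the Hessian form (\ref{3.23}) and test it on unit eigenvectors of $h'$ in $[\delta]'$ and $[-\delta]'$, which pins the constant down explicitly as $\sigma=2n\alpha$ — a sharper output from an equivalent equation, since $\pounds_{Df}g=2\nabla^2 f$. (In fact, combining your two eigenspace identities with the $\xi$-component would further force $2\lambda=\beta r$ and then $\alpha=0$, i.e.\ $V=0$; the paper's weaker pair of scalar consequences does not detect this rigidity.) For $(2)$ the paper deduces $\pounds_V\eta=0$ from $(\pounds_Vg)(X,\xi)=0$ via (\ref{3.15}) and the constraint (\ref{3.37}), whereas your Cartan-formula argument $\pounds_V\eta=\iota_Vd\eta+d(\iota_V\eta)=0$ needs only $d\eta=0$ and the constancy of $c$ — more elementary and independent of (\ref{3.37}). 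For $(3)$ the paper recycles equation (\ref{3.20}) from the proof of Theorem \ref{t3.3} (valid since a GRYS is a RYS), while you reduce $\pounds_{c\xi}h'$ to $c\,\nabla_\xi h'$ and derive $\nabla_\xi h'=0$ by comparing (\ref{2.7}) at $Y=\xi$ with (\ref{3.18}); I checked that the cancellation $\nabla_{h'X}\xi=h'\nabla_X\xi=h'X+h'^2X$ and the identity $(\nabla_X\phi h)\xi=h'X+h'^2X$ both hold, so this works and is self-contained, at the cost of redoing a curvature computation the paper already has packaged in (\ref{3.20}).
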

\begin{proof}
Since $k \neq - 2$, then from theorem \ref{t3.5}, we have $V = (\xi f)\xi = b\xi$, where $(\xi f) = b$ is some smooth function on $M$. Then using (\ref{2.5}), we can easily obtain
\bea
\nonumber (\pounds_{b\xi}g)(X,Y) &=& (Xb)\eta(Y) + (Yb)\eta(X) \\ && + 2b[g(X,Y) - \eta(X)\eta(Y) - g(\phi hX,Y)]. \label{3.33}
\eea
Substituting (\ref{3.33}) in (\ref{3.6}), we obtain
\bea
\nonumber && (Xb)\eta(Y) + (Yb)\eta(X) + 2b[g(X,Y) - \eta(X)\eta(Y) - g(\phi hX,Y)] \\ && = (2\lambda - \beta r)g(X,Y) - 2\alpha S(X,Y). \label{3.34}
\eea
Putting $X = Y = \xi$ in (\ref{3.34}) and using (\ref{3.4}), we obtain
\bea
2(\xi b) = 2\lambda - \beta r - 4nk\alpha. \label{3.35}
\eea
Let $\{e_i\}$ be an orthonormal basis of the tangent space at each point of $M$. Substituting $X = Y = e_i$ in (\ref{3.34}) and then summing over $i$, we obtain
\bea
2(\xi b) = (2\lambda - \beta r)(2n + 1) - 2\alpha r - 4nb. \label{3.36}
\eea
Since $\alpha$, $\beta$, $\lambda$ and $r$ is constant, then equating (\ref{3.35}) and (\ref{3.36}), we can easily see that $b$ is constant. This proves $(1)$.\\
Since $b$ is constant, then from (\ref{3.35}), we have
\bea
2\lambda - \beta r = 4nk\alpha. \label{3.37}
\eea
Again since $b$ is constant and $V = b\xi$, then $\pounds_V \xi = 0$. Using (\ref{3.37}) in (\ref{3.15}), we get ($\pounds_V g)(X,\xi) = 0$, which implies $(\pounds_V \eta)X = 0$ for any vector field $X$. This proves $(2)$.\\
Now using $\pounds_V \xi = 0$ and (\ref{3.37}) in (\ref{3.20}, we get $(\pounds_V h')X = 0$ for any vector field $X$, which means $V$ leaves $h'$ invariant. This proves $(3)$.
\end{proof}

\begin{remark}
It is known that for a smooth tensor field $T$, $\pounds_X T = 0$ if and only if $\phi_t$ is a symmetric transformation for $T$, where $\{\phi_t : t \in \mathbb{R}\}$ is the $1$-parameter grouop of diffeomorphisms corresponding to the vector field $X$ on a manifold (see \cite{mdo}). Since $h'$ is a smooth tensor field of type $(1,1)$ on $M$, then $\pounds_V h' = 0$ if and only if $\psi_t$ is a symmetric transformation for $h'$, where $\{\psi_t : t \in \mathbb{R}\}$ is the $1$-parameter group of diffeomorphisms corresponding to the vector field $V$.
\end{remark}

%=====================================
\section{$(k,\mu)$-almost Kenmotsu manifolds}
%=======================================
In this section, we study RYS on $(k,\mu)$-almost Kenmotsu manifolds with some curvature properties. From (\ref{2.9}), we have
\bea
R(X,Y)\xi = k[\eta(Y)X - \eta(X)Y] + \mu[\eta(Y)hX - \eta(X)hY]. \label{4.1}
\eea
In \cite{dp}, Dileo and Pastore proved that for a $(k,\mu)$-almost Kenmotsu manifold, $k = -1$ and $h = 0$. Hence, from (\ref{4.1}), we get the followings:
\bea
R(X,Y)\xi = \eta(X)Y - \eta(Y)X. \label{4.2}
\eea
\bea
R(\xi,X)Y = - g(X,Y)\xi + \eta(Y)X. \label{4.3}
\eea
\bea
S(X,\xi) = - 2n\eta(X) \;\; \mathrm{and}\;\; Q\xi = - 2n\xi. \label{4.4}
\eea
Also from (\ref{2.5}), we get
\bea
\nabla_X \xi = X - \eta(X)\xi. \label{4.5}
\eea
Again in \cite{dp}, it is proved that in an almost Kenmotsu manifold with $\xi$ belonging to the $(k,\mu)$-nullity distribution, the sectional curvature $K(X,\xi) = - 1$. From this, we get $r = -2n(2n + 1)$.

\begin{lemma} \label{l4.1}
If $(g,\xi,\lambda,\alpha,\beta)$ be a proper RYS on a $(2n + 1)$-dimensional $(k,\mu)$-almost Kenmotsu manifold $M$, then the manifold $M$ is $\eta$-Einstein.
\end{lemma}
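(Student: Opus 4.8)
The plan is to exploit the fact that here the potential vector field is the Reeb field $\xi$ itself, so the Lie-derivative term in the soliton equation can be computed explicitly and in closed form. First I would write out $(\pounds_\xi g)(X,Y) = g(\nabla_X \xi, Y) + g(X, \nabla_Y \xi)$ and insert the expression for $\nabla_X \xi$ from (\ref{4.5}), namely $\nabla_X \xi = X - \eta(X)\xi$. Since the two cross terms coincide, this collapses to
\be
\nonumber (\pounds_\xi g)(X,Y) = 2g(X,Y) - 2\eta(X)\eta(Y).
\ee

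Next I would feed this into the defining relation (\ref{1.1}) of the RYS with $V = \xi$, obtaining
\be
\nonumber 2g(X,Y) - 2\eta(X)\eta(Y) + 2\alpha S(X,Y) = (2\lambda - \beta r)g(X,Y).
\ee
Solving this for $S$ requires dividing by $\alpha$, and this is exactly where the properness hypothesis ($\alpha \neq 0,\,1$, in particular $\alpha \neq 0$) is used. After rearranging I would arrive at
\be
\nonumber S(X,Y) = \frac{2\lambda - \beta r - 2}{2\alpha}g(X,Y) + \frac{1}{\alpha}\eta(X)\eta(Y),
\ee
which is precisely of the form $S = a\,g + b\,\eta \otimes \eta$ and hence exhibits $M$ as $\eta$-Einstein. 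This would complete the proof.

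There is no serious obstacle in this argument, as it is a direct substitution; the only point I would take care about is the indispensable role of properness. If $\alpha = 0$ the soliton degenerates to a Yamabe-type equation and no information about $S$ can be extracted in this way, so the proper assumption is what makes the division, and thus the $\eta$-Einstein conclusion, legitimate. As a sanity check I would set $X = Y = \xi$ and compare the resulting value $S(\xi,\xi) = (2\lambda - \beta r)/(2\alpha)$ against the structural identity $S(\xi,\xi) = -2n$ coming from (\ref{4.4}); consistency forces $2\lambda - \beta r = -4n\alpha$, which fixes $\lambda$ in terms of the remaining constants and of the (constant) scalar curvature $r = -2n(2n+1)$, but is not needed for the $\eta$-Einstein statement itself.
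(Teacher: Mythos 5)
Your proposal is correct and follows essentially the same route as the paper: compute $(\pounds_\xi g)(X,Y) = 2[g(X,Y) - \eta(X)\eta(Y)]$ from $\nabla_X\xi = X - \eta(X)\xi$, substitute into the soliton equation with $V=\xi$, and divide by $\alpha$ (using properness) to exhibit $S$ in the form $a\,g + b\,\eta\otimes\eta$. Your coefficient $\frac{2\lambda - \beta r - 2}{2\alpha}$ agrees with the paper's $\frac{1}{\alpha}[\lambda + n\beta(2n+1) - 1]$ after inserting $r = -2n(2n+1)$, and your concluding consistency check reproduces the paper's subsequent identity $\lambda + n\beta(2n+1) = -2n\alpha$.
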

\begin{proof}
Considering $V = \xi$ in (\ref{1.1}), then we get
\bea
(\pounds_\xi g)(X,Y) + 2\alpha S(X,Y) = (2\lambda - \beta r)g(X,Y). \label{4.6}
\eea
Now,
\bea
\nonumber (\pounds_\xi g)(X,Y) = g(\nabla_X \xi,Y) + g(\nabla_Y \xi,X).
\eea
Using (\ref{4.5}) in the foregoing equation yields
\bea
(\pounds_\xi g)(X,Y) = 2[g(X,Y) - \eta(X)\eta(Y)]. \label{4.7}
\eea
Substituting (\ref{4.7}) in (\ref{4.6}) and using the properness of the RYS and $r = -2n(2n + 1)$, we obtain
\bea
S(X,Y) = \frac{1}{\alpha}[\lambda +n\beta(2n + 1) - 1]g(X,Y) + \frac{1}{\alpha}\eta(X)\eta(Y). \label{4.8}
\eea
This proves that the manifold is $\eta$-Einstein.
\end{proof}

\noindent Now from (\ref{4.4}), we have $$S(\xi,\xi) = -2n.$$
Again from (\ref{4.8}), we get 
$$S(\xi,\xi) = \frac{1}{\alpha}[\lambda + n\beta(2n + 1) - 1] + \frac{1}{\alpha}.$$
Equating these two values of $S(\xi,\xi)$, we obtain
\bea
\lambda +n\beta(2n + 1) = -2n\alpha. \label{4.9}
\eea
Using (\ref{4.9}) in (\ref{4.8}), we have
\bea
S(X,Y) = - \frac{1}{\alpha}(2n\alpha + 1)g(X,Y) + \frac{1}{\alpha}\eta(X)\eta(Y), \label{4.10}
\eea
which implies
\bea
QX = - \frac{1}{\alpha}(2n\alpha + 1)X + \frac{1}{\alpha}\eta(X)\xi. \label{4.11}
\eea
%================================================
\subsection{Proper RYS and the curvature condition $Q \cdot P = 0$}
%======================================================
We now aim to study a proper RYS on a $(2n + 1)$-dimensional $(k,\mu)$-almost Kenmotsu manifold admitting the curvature property $Q \cdot P = 0$, where $P$ is the projective curvature tensor defined for a $(2n + 1)$-dimensional Riemannian manifold as
\bea
P(X,Y)Z = R(X,Y)Z - \frac{1}{2n}[S(Y,Z)X - S(X,Z)Y]. \label{4.12}
\eea
\begin{theorem} \label{t4.2}
If $(g,\xi,\lambda,\alpha,\beta)$ is a proper RYS on a $(2n + 1)$-dimensional $(k,\mu)$-almost Kenmotsu manifold $M$ satisfying the curvature property $Q \cdot P = 0$, then the manifold $M$ is locally isometric to the hyperbolic space $\mathbb{H}^{2n+1}(-1)$.
\end{theorem}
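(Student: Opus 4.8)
The plan is to feed the explicitly known Ricci operator (4.11) into the derivation condition
\[
(Q \cdot P)(X,Y)Z = Q(P(X,Y)Z) - P(QX,Y)Z - P(X,QY)Z - P(X,Y)QZ = 0.
\]
Write $QX = aX + b\,\eta(X)\xi$ with $a = -\frac{1}{\alpha}(2n\alpha+1)$ and $b = \frac{1}{\alpha}$, so that $Q$ is a multiple of the identity plus a rank-one term along $\xi$. Each occurrence of $Q$ then splits into an $a$-part, which is just a multiple of $P(X,Y)Z$, and a $b$-part, which either inserts $\xi$ into one slot or projects the output onto $\xi$. Collecting the four $a$-parts yields a single scalar multiple of $P(X,Y)Z$, so the whole identity collapses to that multiple of $P$ plus a bundle of $\xi$-insertion terms.

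First I would record the three auxiliary projective quantities required by the $b$-part: $P(X,Y)\xi$, $P(\xi,Y)Z$ (and hence $P(X,\xi)Z$ by the skew-symmetry of $P$ in its first two arguments), and $\eta(P(X,Y)Z)$. Using (4.2), (4.3), (4.4) together with the $\eta$-Einstein form (4.10), these are routine substitutions into the definition (4.12). I expect $P(X,Y)\xi = 0$ identically, while $P(\xi,Y)Z$ and $\eta(P(X,Y)Z)$ emerge as explicit combinations of $g$ and $\eta$ whose only vector part is $\xi$. Substituting these back, the term $P(X,Y)QZ$ reduces to $a\,P(X,Y)Z$ because its $\xi$-component dies, and the surviving $\xi$-insertion contributions should cancel in pairs. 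The net outcome I anticipate is an identity of the shape $a\,P(X,Y)Z = 0$ for all $X,Y,Z$.

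The decisive step is the case analysis on the scalar $a = -\frac{1}{\alpha}(2n\alpha+1)$. If $a \neq 0$, then $P \equiv 0$, i.e. $M$ is projectively flat; by Beltrami's theorem a projectively flat Riemannian manifold of dimension $\geq 3$ has constant sectional curvature, and since the paper records $K(X,\xi) = -1$ this constant must be $-1$, giving the asserted local isometry with $\mathbb{H}^{2n+1}(-1)$. The opposite branch $a = 0$ (equivalently $\alpha = -\frac{1}{2n}$) must be excluded; here I would test the resulting Ricci tensor against the ambient structural data — comparing its trace with the scalar curvature $r = -2n(2n+1)$, or examining its restriction to the contact subbundle $\mathcal{D}$ — so that only the projectively flat case survives.

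The main obstacle is twofold. First, the disciplined bookkeeping needed to confirm that every $\xi$-insertion term genuinely cancels: a single sign or coefficient slip would alter the surviving scalar factor and thereby the case analysis. Second, cleanly disposing of the degenerate branch $a=0$, so that the argument really lands on projective flatness rather than merely on a restriction of $\alpha$. Once $P \equiv 0$ is established, the passage to $\mathbb{H}^{2n+1}(-1)$ through Beltrami's theorem and the normalization $K(X,\xi) = -1$ is immediate.
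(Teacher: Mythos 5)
Your proposal is correct and follows essentially the same route as the paper: expand $Q\cdot P=0$ using the $\eta$-Einstein form $QX=-\frac{1}{\alpha}(2n\alpha+1)X+\frac{1}{\alpha}\eta(X)\xi$, verify that the $\xi$-insertion terms cancel (they do, since $P(X,Y)\xi=0$ and the remaining pieces are all proportional to $\xi$ with cancelling coefficients), rule out $2n\alpha+1=0$ by the scalar-curvature contradiction $r=-2n\neq-2n(2n+1)$, and conclude $P\equiv 0$. The only cosmetic difference is at the end: the paper contracts $R=\frac{1}{2n}(S(Y,Z)X-S(X,Z)Y)$ to get $S=-2ng$ and hence constant curvature $-1$ directly, whereas you invoke Beltrami's theorem together with $K(X,\xi)=-1$; both are valid.
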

\begin{proof}
Let us suppose that the curvature property $Q \cdot P = 0$ holds on $M$. Then for any vector fields $X,\; Y,\; Z$ on $M$, we have
\bea
\nonumber Q(P(X,Y)Z) - P(QX,Y)Z - P(X,QY)Z - P(X,Y)QZ = 0.
\eea
Using (\ref{4.11}) in the foregoing equation yields 
\bea
\nonumber && \frac{1}{\alpha}[\eta(P(X,Y)Z)\xi - \eta(X)P(\xi,Y)Z - \eta(Y)P(X,\xi)Z \\ &&- \eta(Z)P(X,Y)\xi + 2(2n\alpha + 1)P(X,Y)Z] = 0. \label{4.13}
\eea
Now, with the help of (\ref{4.2})-(\ref{4.4}), we calculate the followings:
\bea
P(\xi,Y)Z = - g(Y,Z)\xi - \frac{1}{2n}S(Y,Z)\xi. \label{4.14}
\eea
\bea
P(X,\xi)Z = g(X,Z)\xi + \frac{1}{2n}S(X,Z)\xi. \label{4.15}
\eea
\bea
P(X,Y)\xi = 0. \label{4.16}
\eea
\bea
\nonumber \eta(P(X,Y)Z) &=& \eta(Y)g(X,Z) - \eta(X)g(Y,Z) \\ && - \frac{1}{2n}[S(Y,Z)\eta(X) - S(X,Z)\eta(Y)]. \label{4.17}
\eea
Substituting (\ref{4.14})-(\ref{4.17}) in (\ref{4.13}) yields
\bea
\nonumber 2(2n\alpha + 1)P(X,Y)Z = 0,
\eea
which implies either $2n\alpha + 1 = 0$ or $P(X,Y)Z = 0$.\\
If $2n\alpha + 1 = 0$, then from (\ref{4.10}), we get 
\bea
\nonumber S(X,Y) = \frac{1}{\alpha}\eta(X)\eta(Y) = -2n\eta(X)\eta(Y),
\eea
which implies $r = -2n$, a contradiction to the fact that $r = - 2n(2n + 1)$. Hence, $P(X,Y)Z = 0$ and therefore, (\ref{4.12}) implies 
\bea
R(X,Y)Z = \frac{1}{2n}[S(Y,Z)X - S(X,Z)Y]. \label{4.18}
\eea
Taking inner product of (\ref{4.18}) with $W$ and then contracting $Y$ and $Z$ yields 
\bea
S(X,W) = - 2ng(X,W). \label{4.19}
\eea
Using (\ref{4.19}) in (\ref{4.18}), we obtain
\bea
\nonumber R(X,Y)Z = -[g(Y,Z)X - g(X,Z)Y].
\eea
This proves that $M$ is locally isometric to $\mathbb{H}^{2n+1}(-1)$. 
\end{proof}

%=====================================================
\subsection{Proper RYS and non-existence of $Q \cdot R = 0$}
%======================================================
The next theorem is concerned about the non-existence of the curvature property $Q \cdot R = 0$ on $(k,\mu)$-almost Kenmotsu manifolds admiiting proper RYS.
\begin{theorem}
If $M$ be a $(2n + 1)$-dimensional $(k,\mu)$-almost Kenmotsu manifold admitting a proper RYS $(g,\xi,\lambda,\alpha,\beta)$, then the curvaute property $Q \cdot R = 0$ does not hold on $M$.
\end{theorem}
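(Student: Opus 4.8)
The plan is to argue by contradiction, in close parallel with the proof of Theorem~\ref{t4.2}. Suppose $Q \cdot R = 0$ holds on $M$. Writing out the derivation condition,
\[ Q(R(X,Y)Z) - R(QX,Y)Z - R(X,QY)Z - R(X,Y)QZ = 0, \]
I would substitute the explicit Ricci operator (\ref{4.11}), i.e. $QX = -\frac{1}{\alpha}(2n\alpha+1)X + \frac{1}{\alpha}\eta(X)\xi$, and track its two summands separately. Because $Q$ enters as a derivation, the scalar-identity part does \emph{not} cancel; it leaves a surviving multiple of $R$, exactly as the term $2(2n\alpha+1)P(X,Y)Z$ survives in (\ref{4.13}). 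One is thus led to the analogue of (\ref{4.13}),
\[ 2(2n\alpha+1)R(X,Y)Z + \eta(R(X,Y)Z)\xi - \eta(X)R(\xi,Y)Z - \eta(Y)R(X,\xi)Z - \eta(Z)R(X,Y)\xi = 0. \]

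Next I would insert the $(k,\mu)$-curvature formulas. From (\ref{4.2}) and (\ref{4.3}) one has $R(\xi,Y)Z = -g(Y,Z)\xi + \eta(Z)Y$, $R(X,\xi)Z = g(X,Z)\xi - \eta(Z)X$, and $R(X,Y)\xi = \eta(X)Y - \eta(Y)X$, while the skew-symmetry of $R$ in its last two slots gives $\eta(R(X,Y)Z) = \eta(Y)g(X,Z) - \eta(X)g(Y,Z)$. Substituting, the terms along $\xi$ and those carrying $g(Y,Z)$ or $g(X,Z)$ cancel in pairs, and the identity collapses to
\[ (2n\alpha+1)R(X,Y)Z = \eta(X)\eta(Z)Y - \eta(Y)\eta(Z)X. \]

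Finally I would extract the contradiction. Setting $Z = \xi$ and using (\ref{4.2}) reduces the last display to $2n\alpha\,[\eta(X)Y - \eta(Y)X] = 0$; choosing $X = \xi$ and any $Y \in \mathcal{D}$ (possible since $n \geq 1$) forces $\alpha = 0$, contradicting the properness hypothesis $\alpha \neq 0$. As a cross-check, contracting the same relation over an orthonormal frame gives $(2n\alpha+1)S(Y,Z) = -2n\eta(Y)\eta(Z)$, which is incompatible with the $\eta$-Einstein form (\ref{4.10}) unless $2n\alpha+1 = 0$, and in that case the relation itself becomes $0 = \eta(X)\eta(Z)Y - \eta(Y)\eta(Z)X$, again false; so $\alpha = 0$ is unavoidable. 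Hence a proper RYS cannot coexist with $Q \cdot R = 0$.

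The only genuine care-point, and the step I would watch most closely, is the derivation bookkeeping in the first paragraph: one must resist the temptation to let the scalar part of $Q$ drop out, since it in fact contributes the decisive coefficient $2(2n\alpha+1)$. After the clean relation $(2n\alpha+1)R(X,Y)Z = \eta(X)\eta(Z)Y - \eta(Y)\eta(Z)X$ is in hand, the contradiction is immediate and essentially forced by the already-established $\eta$-Einstein structure.
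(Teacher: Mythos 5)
Your proposal is correct and follows essentially the same route as the paper: substitute the $\eta$-Einstein form of $Q$ into the derivation condition, use the $(k,\mu)$ curvature identities to collapse it to $(2n\alpha+1)R(X,Y)Z = \eta(Z)[\eta(X)Y - \eta(Y)X]$, and compare with $R(X,Y)\xi = \eta(X)Y - \eta(Y)X$ to force $\alpha = 0$. Your version is in fact slightly more careful than the paper's, which divides by $2n\alpha+1$ without first excluding $2n\alpha+1=0$; your cross-check disposes of that case explicitly.
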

\begin{proof}
Let the curvature property $Q \cdot R = 0$ holds on $M$. Then for any vector fields $X$, $Y$ and $Z$ on $M$, we have
\bea
Q(R(X,Y)Z) - R(QX,Y)Z - R(X,QY)Z - R(X,Y)QZ =0. \label{4.20}
\eea
Using (\ref{4.11}) in (\ref{4.20}), we get
\bea
\nonumber && 2(2n\alpha + 1)R(X,Y)Z + \eta(R(X,Y)Z)\xi -\eta(X)R(\xi,Y)Z \\ &&- \eta(Y)R(X,\xi)Z - \eta(Z)R(X,Y)\xi = 0. \label{4.21}
\eea
With the help of (\ref{4.2}), we obtain
\bea
\eta(R(X,Y)Z) = g(X,Z)\eta(Y) - g(Y,Z)\eta(X). \label{4.22}
\eea
Using (\ref{4.3}) and (\ref{4.22}) in (\ref{4.21}) yields
\bea
(2n\alpha + 1)R(X,Y)Z = \eta(Z)[\eta(X)Y - \eta(Y)X],
\eea
which implies
\bea
R(X,Y)\xi = \frac{1}{2n\alpha + 1}[\eta(X)Y - \eta(Y)X]. \label{4.23}
\eea
Comparing (\ref{4.2}) and k\ref{4.23}), we get $\frac{1}{2n\alpha + 1} = 1$, which implies $\alpha = 0$, a contradiction to the fact that the RYS is proper. This completes the proof.
\end{proof}

\vspace{0.5in}

\noindent \textbf{Author's Information}\\

\noindent {Dibakar Dey\\ Department of Pure Mathematics\\ University of Calcutta\\ 35, Ballygunge Circular Road\\ Kolkata-700019\\West Bengal, India\\ Email: deydibakar3@gmail.com}\\


\begin{thebibliography}{99}
 
 \bibitem{akrami} Y. Akrami, T. S. Koivisto and A. R. Solomon, {\em The nature of spacetime in bigravity: two metrics or none?}, Gen. Relativ. and Grav. 47(2015), 1838.
 
 \bibitem{bkp} D. E. Blair, T. Koufogiorgos and B. J. Papantoniou, {\em contact metric manifolds satisfying a nullity condition}, Israel J. Math. 91(1995), 189-214.
 
 \bibitem{bc} W. Boskoff and M. Crasmareanu, {\em A Rosen type bi-metric universe and its physical properties}, Int. J. Geom. Methods Mod. Phys. 15(2018), 1850174.
 
 \bibitem{mdo} M. Do Carmo, {\em Riemannian Geometry}, Birkhauser, Boston, 1992.

\bibitem{catino1} G. Catino and L. Mazzieri, {\em Gradient Einstein solitons}, Nonlinear Anal. 132(2016), 66-94.

\bibitem{catino2} G. Catino, L. Cremaschi, Z. Djadli, C. Mantegazza and L. Mazzieri, {\em The Ricci-Bourguignon flow}, Pacific J. Math. 28(2017), 337-370.

\bibitem{dey2} U. C. De and D. Dey, {\em Pseudo-symmetric structures on almost Kenmotsu manifolds with nullity distributions}, Acta Comment. Univ. Tartu. Math. 23(2019), 13-24.

 \bibitem{dey1} D. Dey and P. Majhi, {\em Almost Kenmotsu metric as conformal Ricci soliton}, Conform. Geom. Dyn. 23(2019), 105-116.
 

 \bibitem{dp} G. Dileo and A. M. Pastore, {\em Almost Kenmotsu manifolds and nullity distributions}, J. Geom. 93(2009), 46-61.

 
 \bibitem{gc} S. G$\ddot{u}$ler and M. Crasmareanu, {\em Ricci-Yamabe maps for Riemannian flows and their volume variation and volume entropy}, Turk. J. Math. 43(2019), 2361-2641.
 

 \bibitem{hamilton1} R. S. Hamilton, {\em Three manifold with positive Ricci curvature}, J. Differential Geom. 17(1982), 255-306.
 
 \bibitem{hamilton2} R. S. Hamilton, {\em Ricci flow on surfaces}, Contempory Mathematics 71(1988), 237-261.
 
 \bibitem{hamilton3} R. S. Hamilton, {\em Lectures on geometric flows}, 1989 (unpublished).
 
 \bibitem{pv} A. M. Pastore and V. Saltarelli, {\em Generalized nullity distribution on almost Kenmotsu manifolds}, Int. Elec. J. Geom. 4(2011), 168-183.
 
\bibitem{vv} V Venkatesha and H. Aruna Kumara, {\em Gradient $\rho$-Einstein soliton on almost Kenmotsu manifolds}, Ann. Univ. Ferrara 65(2019), 375-388.

\bibitem{ywang} Y. Wang, {\em Gradient Ricci almost solitons on two classes of almost Kenmotsu manifolds}, J. Korean Math. Soc. 53(2016), 1101-1114.

\bibitem{wang-liu1} Y. Wang and X. Liu, {\em Riemannian semisymmetric almost Kenmotsu manifolds and nullity distributiins}, Ann. Polon. Math. 112(2014), 37-46.

\bibitem{wang-liu2} Y. Wang and X. Liu, {\em On $\phi$-recurrent almost Kenmotsu manifolds}, Kuwait J. Sci. 42(2015), 65-77.

\bibitem{yano} K. Yano, {\em Integral formulas in Riemannian Geometry}, Marcel Dekker, New York, 1970.
\end{thebibliography}
\end{document}